\newtheorem{theorem}{Theorem}[section]
\newtheorem{theorem-definition}[theorem]{Theorem-Definition}
\newtheorem{proposition}{Proposition}[section]
\newtheorem{lemma}{Lemma}[section]
\newtheorem{remark}{Remark}[section]
\newcommand{\ZZ}{\mathbb{Z}}
\newcommand{\RR}{\mathbb{R}}
\newcommand{\CC}{\mathbb{C}}
\numberwithin{equation}{section}
\begin{document}

\title[On the positive powers of $q$-analogs of Euler series]{On the positive powers of $q$-analogs of Euler series}
\author{Changgui ZHANG}
\address{ Laboratoire P. Painlev\'e (UMR -- CNRS 8524), D\'epartement Math., FST, Universit\'e de Lille, Cit\'e scientifique, 59655
Villeneuve d'Ascq cedex, France}
\email{changgui.zhang@univ-lille.fr}
\thanks{This work was partially supported by Labex CEMPI (Centre Européen pour les
Mathé\-ma\-tiques, la Physique et leurs Interaction).}

\begin{abstract}
The most simple and famous divergent power series coming from ODE may be the so-called Euler series $\sum_{n\ge 0}(-1)^n\,n!\,x^{n+1}$, that, as well as all its positive powers, is Borel-summable in any direction excepted the negative real half-axis (see \cite{Ba} or \cite{Ma}).  
By considering a family of linear $q$-difference operators associated with a given first order non-homogenous  $q$-difference equation, it will be shown that  the summability order of $q$-analoguous counterparties of Euler series depends upon of the degree of power under consideration.
\end{abstract}
\maketitle
 \section{Introduction}
 
It is well-known that, given any direction starting from the origin of the complex plan, the set of all Borel-summable series in this direction forms a differential algebra; see \cite[Proposition 1.3.4.2]{Ma} or \cite[Theorem 36]{Ba}. This is not the same for the summability with respect to the power series solutions of $q$-difference equations. Indeed, if $q>1$ and \begin{equation}
 \label{equation:Eqx}
\hat E_q(x)=\sum_{n\ge 0}(-1)^nq^{n(n-1)/2}x^n\,,
\end{equation}
the divergent power series $\hat E_q(x)$ is $Gq$-summable of only one level but its square $\hat E_q(x)^2$ is $Gq$-summable of two levels; see \cite[\S4.3.8]{Zhang1} and \cite[Th\'eor\`eme 2.2.1]{MZ}.  The first purpose of this paper is to obtain a linear $q$-difference equation for every positive power of the power series $\hat E_q(x)$. This will allow one to see the different level of summability for each of these corresponding power series. 

In what follows, let $q$ be a given nonzero complex number, and let $\sigma_q$ be the associated $q$-difference operator defined by $\sigma_qf(x)=f(qx)$. By direct computation, one finds the series defined by \eqref{equation:Eqx} in the above satisfies term by term the following linear $q$-difference equation :
\begin{equation}
\label{equation:Euler}
 (x\sigma_q+1)y=1\,.
\end{equation}
Now, write \eqref{equation:Euler} into the form $x\sigma_qy=1-y$ and then square both sides. As $(\sigma_qy)^2=\sigma_q(y^2)$, one obtains that
$
x^2\sigma_q(y^2)-y^2=1-2y
$. By using again \eqref{equation:Euler}, one gets the following equation:
$$
(x\sigma_q+1)(x^2\sigma_q-1)(y^2)=x+1-2(x\sigma_q+1)y\,,
$$
what implies that
 \begin{equation}
  \label{equation:E2} (x\sigma_q+1)(x^2\sigma_q-1)\hat E_q(x)^2=x-1\,.
 \end{equation}
 
 We shall explain how to obtain a linear $q$-difference equation for $\hat E_q(x)^n$. For doing that, we shall start by recalling some non-commutative rings of $q$-difference operators and then arrive at a $n$-th order such operator associated with any given $n$-th power of a power series solution of some first order non-homogenous $q$-difference  equation. See Theorem \ref{thm:fn} in \S~\ref{sec:thm} and its generalisation stated in Theorem \ref{thm:gen} in \S~\ref{sec:gen}. Section \ref{sec:proof} is reserved to the proofs of Theorem \ref{thm:fn} and two related Lemmas. In the last section, we shall apply, respectively, both Theorems \ref{thm:fn} and  \ref{thm:gen} to the above-mentioned power series $\hat E_q(x)$ and an other $q$-analog $\hat E(x;q)$ of the classic Euler series. At the end of the paper will be outlined some results about the summability of both $\hat E_q(x)$ and  $\hat E(x;q)$ when $q>1$; see Theorem \ref{thm:summability}. 
  
 Throughout the present paper, we shall limit ourself to the $q$-summation method studied in our previous works \cite{Zhang1} and \cite{MZ}, by means of a $q$-Laplace integral involving the multivalued function $e^{\log^2x/2\ln q}$. There exists another summation, using a Jacobi theta function; see \cite{RSZ}. Comparing both summations can be seen as part of Stokes analysis, and this would be interesting to find identities on the associated special functions.

 \section{Notation and statements}\label{sec:thm}
 As usual, we will denote by $\CC\{x\}$ or $\CC[[x]]$ the integral domain constituting of the germs of holomorphic functions at $x=0$ in the complex plan or that of the power series of indeterminate $x$, respectively. Their respective corresponding field of fractions will be denoted by $\CC[x^{-1}]\{x\}$ or $\CC[x^{-1}][[x]]$. One has $\CC\{x\}\subset\CC[[x]]$, as well as  $\CC[x^{-1}]\{x\}\subset\CC[x^{-1}][[x]]$, in a natural way via the classic Taylor or Laurent series.
 
 Let $R$ be one of the above-mentioned $\CC$-vector spaces $\CC\{x\}$, $\CC[[x]]$, $\CC[x^{-1}]\{x\}$ and $\CC[x^{-1}][[x]]$. By replacing $x$ with $qx$,
 the $q$-difference operator $\sigma_q$ acts as being an automorphism over $R$. Let  $\sigma_q^{k}=\sigma_{q^k}$ for any integer $k$; in particular, $\sigma_q^0$ is simply the identity map on $R$. We will denote by $R[\sigma_q]$ the set of the (linear) $q$-difference operators whose coefficients belong to $R$, {\it i.e.} $L\in R[\sigma_q]$ if $L=\sum_{k=0}^na_k\sigma_q^k$, where $a_k\in R$. By identifying each element of $R$ with the corresponding multiplication map in $R$, the set $R[\sigma_q]$ constitutes itself a non-commutative ring for the compoisition of operators. Given $L_1$, $L_2\in R[\sigma_q]$, we will write $L_1L_2$ in stead of $L_1\circ L_2$.
 
 Furthermore, we will denote by $\nu_0(f)\in\ZZ\cup\{\infty\}$ the valuation of any given element $f$ of the field $ \CC[x^{-1}][[x]]$ ``at $x=0$''. This means that (i) $\nu_0(f)=\infty$ iff $f$ is the identically vanishing series, and (ii) $v_0(f)=\nu\in\ZZ$ iff $f\equiv c\,x^{\nu}$ $\bmod$ $x^{\nu+1}\,\CC[[x]]$ for some nonzero complex number $c$.
 
 Throughout the whole section, $P$ will be some given nonzero power series belon\-ging to the field $\CC[x^{-1}][[x]]$. Define the associated family of power series $\{P_n\}_{n\ge 0}$ as follows: $P_0=0$, $P_1=P$, and
  \begin{equation}
   \label{equation:Pn}P_n=\sum_{k=0}^{n-1}(-x\sigma_q)^kP\quad\textrm{for}\quad n\ge 2.
  \end{equation}
  By noticing that $(-x\sigma_q)^kP=(-x)^k\,q^{k(k-1)/2}\,\sigma_q^kP$, it follows from 
   \eqref{equation:Pn} that, for any pair $(n,\ell)\in\ZZ_{\ge 0}\times\ZZ_{>0}$, 
  \begin{align*}
  P_{n+\ell}-P_{n}&=\sum_{k=n}^{n+\ell-1}(-x)^k\,q^{k(k-1)/2}\,\sigma_q^kP\\
  &\equiv (-1)^n\,c\,q^{n\nu+n(n-1)/2}x^{\nu+n}\ \bmod\ x^{\nu+n+1}\,\CC[[x]]\,,
\end{align*}
where $c\not=0$ and $\nu=\nu_0(P)\in\ZZ$.
 This implies  that
  \begin{equation}
   \label{equation:Pmn}P_m\not= P_{n}\qquad \textrm{if}\qquad m\not=n.
  \end{equation}

Given any positive integer $n\ge 1$, let $\{L_{n,k}^P\}_{1\le k\le n}\subset\left(\CC[x^{-1}][[x]]\right)[\sigma_q]$ be the associated family of $q$-difference operators defined in the following manner:
  \begin{align}
   \label{equation:Ln1}L_{n,1}^P&=\frac{1}{P_1}\left(x^n\sigma_q-(-1)^n\right)=\frac{1}{P}\left(x^n\sigma_q-(-1)^n\right);\\  \label{equation:Lnk}
    L_{n,k+1}^P&=\frac{1}{P_{k+1}}\left(x^{n-k}\sigma_q-(-1)^{n-k}\right)\,L_{n,k}^P,\quad  1\le k<n\,.
  \end{align}
 Specifically, letting $k=n$, one gets the following form of the $n$-th order $q$-difference operator $L_{n,n}^P\in\left(\CC[x^{-1}][[x]]\right)[\sigma_q]$:
  \begin{equation}
   \label{equation:Ln}
   L_{n,n}^P=\frac{1}{P_n}\left(x\sigma_q+1\right)\frac{1}{P_{n-1}}\left(x^2\sigma_q-1\right)\cdots\frac{1}{P_1}\left(x^n\sigma_q-(-1)^n\right)\,.
  \end{equation}

\begin{theorem}\label{thm:fn}
 Let $n$ be an integer $\ge 1$,  let $P$, $f\in\CC[x^{-1}][[x]]$ with $P\not=0$, and suppose that the following $q$-difference equation is satisfied: 
 \begin{equation}
  \label{equation:f}
  x\sigma_qf+f=P\,.
 \end{equation}
 Then:
 \begin{equation}
  \label{equation:fn}
  L_{n,n}^P(f^n)=(-1)^{n(n-1)/2}\,,
 \end{equation}
 where $L_{n,n}^P$ is the $q$-difference operator defined by \eqref{equation:Ln}. 
\end{theorem}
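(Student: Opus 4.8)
The plan is to introduce the auxiliary elements $u_m = f - P_m \in \CC[x^{-1}][[x]]$ (so that $u_0 = f$, since $P_0 = 0$) and to recognise each intermediate quantity $g_k := L_{n,k}^P(f^n)$ as a \emph{signed complete homogeneous symmetric polynomial} in $u_0,\dots,u_k$. First I would record the two facts on which everything rests. Iterating $f = P - x\sigma_q f$ from \eqref{equation:f} gives $f - P_m = (-x\sigma_q)^m f$, whence the shift rule
\[ x\sigma_q\, u_m = -\,u_{m+1}, \qquad m \ge 0, \]
while directly from the definition of $u_m$ one has $P_{k+1} = u_0 - u_{k+1}$. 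The first of these is the lemma controlling how $\sigma_q$ interacts with the $u_m$; the second is what will make the divisions by $P_{k+1}$ in \eqref{equation:Lnk} come out exactly.

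The engine of the proof is that $x^d\sigma_q$ acts very simply on degree-$d$ expressions in the $u_m$. Since $\sigma_q$ is a ring homomorphism and $x^d$ distributes one factor of $x$ onto each $u$, the shift rule gives, for any monomial of degree $d$,
\[ x^d\sigma_q\bigl(u_{m_1}\cdots u_{m_d}\bigr) = (-1)^d\, u_{m_1+1}\cdots u_{m_d+1}. \]
Thus on a polynomial in the $u_m$ that is homogeneous of degree $d$ (each $u_m$ having weight one), the operator $x^d\sigma_q$ merely raises every index by one and multiplies by $(-1)^d$. Writing $h_N$ for the complete homogeneous symmetric polynomial of degree $N$, I would then prove by induction on $k$ that
\[ g_k = \eta_k\, h_{n-k}(u_0,\dots,u_k), \qquad \eta_0 = 1,\quad \eta_{k+1} = (-1)^{\,n-k+1}\eta_k, \]
so that in particular $g_k$ is homogeneous of degree $n-k$, which is exactly what keeps the preceding display applicable at the next step (note $g_0 = f^n = u_0^n = h_n(u_0)$).

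The inductive step is where the pieces fit together. Applying the operator from \eqref{equation:Lnk} together with the index-shift rule to $g_k = \eta_k h_{n-k}(u_0,\dots,u_k)$ produces $\eta_k(-1)^{n-k}\bigl(h_{n-k}(u_1,\dots,u_{k+1}) - h_{n-k}(u_0,\dots,u_k)\bigr)$; here I would invoke the telescoping identity for complete homogeneous symmetric polynomials,
\[ h_N(u_1,\dots,u_{k+1}) - h_N(u_0,\dots,u_k) = (u_{k+1}-u_0)\,h_{N-1}(u_0,\dots,u_{k+1}), \]
whose proof is a one-line generating-function computation from $\sum_N h_N t^N = \prod_i(1-u_i t)^{-1}$. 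Because $P_{k+1} = u_0 - u_{k+1}$ is, up to sign, precisely the factor $u_{k+1}-u_0$ occurring here, the division required by \eqref{equation:Lnk} is exact and yields $g_{k+1} = \eta_{k+1}h_{n-k-1}(u_0,\dots,u_{k+1})$ with the asserted sign recursion. At $k=n$ one has $h_0 = 1$, so $g_n = \eta_n$, and telescoping the signs gives $\eta_n = (-1)^{\sum_{k=0}^{n-1}(n-k+1)} = (-1)^{\binom{n+2}{2}-1}$, an exponent congruent to $\binom{n}{2} = n(n-1)/2$ modulo $2$; this is exactly the value $(-1)^{n(n-1)/2}$ of \eqref{equation:fn}.

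The step I expect to be the main obstacle is not any single calculation but keeping the bookkeeping airtight: one must carry the homogeneity of $g_k$ as part of the induction hypothesis, since it is only the exact matching of the exponent $n-k$ in $x^{n-k}\sigma_q$ with the degree of $g_k$ that lets the operator act as a clean index shift. Equally, the entire scheme becomes transparent only after passing to the variables $u_m = f - P_m$: in the original variables the degree drop from $g_k$ to $g_{k+1}$ and the exact divisibility by $P_{k+1}$ look accidental, whereas here they are, respectively, the telescoping identity for $h_N$ and the identity $P_{k+1} = u_0 - u_{k+1}$.
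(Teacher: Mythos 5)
Your argument is correct, and it takes a genuinely different route from the paper's. The paper proves (Lemma \ref{lem:Akj}, by the same induction on $k$ through the operators \eqref{equation:Lnk}) that $L_{n,k}^P(f^n)=(-1)^{k(2n-k+1)/2}\sum_{j=0}^{k}A_{k;j}^{-1}(f-P_j)^n$ with $A_{k;j}=\prod_{\ell\neq j}(P_j-P_\ell)$, i.e.\ it carries the intermediate quantity in Lagrange/partial-fraction form, and then needs a separate interpolation identity (Lemma \ref{lem:sym}) to evaluate the sum at $k=n$. Your $g_k=\eta_k\,h_{n-k}(u_0,\dots,u_k)$ is the \emph{same} object in disguise: since $u_j=f-P_j$ and $P_j-P_\ell=-(u_j-u_\ell)$, the classical formula $h_{n-k}(u_0,\dots,u_k)=\sum_{j}u_j^{\,n}/\prod_{\ell\neq j}(u_j-u_\ell)$ identifies the two expressions (and reconciles the signs, $\eta_k=(-1)^k\epsilon_{n,k}$). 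What your packaging buys is that the exact divisibility of the increment by $P_{k+1}=u_0-u_{k+1}$ becomes the one-line telescoping identity for complete homogeneous symmetric polynomials, and the endgame is trivial because $h_0=1$, so no analogue of Lemma \ref{lem:sym} is required; your sign computation $(-1)^{\binom{n+2}{2}-1}=(-1)^{n(n-1)/2}$ also checks out. What the paper's form buys is that the nonvanishing of the denominators $A_{k;j}$, hence hypothesis \eqref{equation:Pmn}, is made explicit, which is exactly what is reused in Theorem \ref{thm:gen}. Two points worth making explicit if you write this up: the index-shift rule for $x^d\sigma_q$ applies to homogeneous polynomials in the $u_m$ with \emph{constant} coefficients (since $\sigma_q$ acts on coefficients too) -- harmless here because $h_{n-k}$ has integer coefficients -- and the identity $f-P_m=(-x\sigma_q)^mf$ deserves its own one-line induction via $x\sigma_qP_j=P-P_{j+1}$.
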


Theorem \ref{thm:fn}
will be proved with the help of the following lemmas.

\begin{lemma}\label{lem:Akj}
 Let $(n,k)\in \ZZ^2$ such that $1\le k\le n$, and let $P$, $f$ be as in Theorem \ref{thm:fn}. For ${0\le j\le k}$, define $A_{k;j}\in\CC[x^{-1}][[x]]$ by
 \begin{equation}
  \label{equation:Akj}
  A_{k;j}=\prod_{\substack{\ell\not=j\\ 0\le\ell\le k}}\left( P_j-P_\ell\right).
 \end{equation}
One has:
\begin{equation}
 \label{equation:Lnkf}
 L_{n,k}^P(f^n)=(-1)^{k(2n-k+1)/2}\sum_{j=0}^k\frac{1}{A_{k;j}}\left(f-P_j\right)^n\,.
\end{equation} 
\end{lemma}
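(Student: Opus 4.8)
The plan is to prove \eqref{equation:Lnkf} by induction on $k$, for $1\le k\le n$, using the recursive definition \eqref{equation:Lnk} of the operators $L_{n,k}^P$. The whole argument rests on one elementary translation rule extracted from \eqref{equation:f}. Rewriting \eqref{equation:Pn} in the form $P_{j+1}=P-x\sigma_qP_j$ (valid for $j\ge 0$ with $P_0=0$) gives $\sigma_qP_j=(P-P_{j+1})/x$, and solving \eqref{equation:f} for $\sigma_qf$ gives $\sigma_qf=(P-f)/x$. Subtracting, I obtain the key identity
\[
\sigma_q(f-P_j)=-\frac1x\,(f-P_{j+1}),\qquad\text{hence}\qquad \sigma_q\bigl((f-P_j)^n\bigr)=\frac{(-1)^n}{x^n}\,(f-P_{j+1})^n .
\]
The index shift $j\mapsto j+1$ produced here is exactly the mechanism that lets the induction close.

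For the base case $k=1$ I would compute $L_{n,1}^P(f^n)=\frac1P\bigl(x^n\sigma_q-(-1)^n\bigr)(f^n)$ directly. Since $\sigma_q(f^n)=(\sigma_qf)^n=(P-f)^n/x^n$, we get $x^n\sigma_q(f^n)=(P-f)^n$, so that $L_{n,1}^P(f^n)=\frac1P\bigl[(P-f)^n-(-1)^nf^n\bigr]$. A short check against $A_{1;0}=-P$ and $A_{1;1}=P$ then shows this equals the right-hand side of \eqref{equation:Lnkf} at $k=1$.

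The inductive step is where the real work lies. Assuming \eqref{equation:Lnkf} at level $k$, I apply $\frac1{P_{k+1}}\bigl(x^{n-k}\sigma_q-(-1)^{n-k}\bigr)$ and treat its two pieces separately. The $\sigma_q$-piece requires the action of $\sigma_q$ on each coefficient $1/A_{k;j}$; a direct computation from \eqref{equation:Akj} via the translation rule gives
\[
\sigma_q(A_{k;j})=\frac{(-1)^k}{x^k}\,\frac{A_{k+1;j+1}}{P_{j+1}},
\]
the crucial point being that passing from the $k$-fold to the $(k+1)$-fold product supplies precisely the extra factor $P_{j+1}-P_0=P_{j+1}$. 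Substituting this together with the translation rule for $(f-P_j)^n$, the powers of $x$ cancel identically, and after the reindexing $i=j+1$ the $\sigma_q$-piece collapses to $(-1)^{n+k}\sum_{i=1}^{k+1}\frac{P_i}{A_{k+1;i}}(f-P_i)^n$. For the identity-piece I would instead use the relation $A_{k+1;j}=A_{k;j}\,(P_j-P_{k+1})$, immediate from \eqref{equation:Akj}, to re-express $1/A_{k;j}$ through $1/A_{k+1;j}$, and note that $(-1)^{n-k}=(-1)^{n+k}$ so that both pieces share a common sign factor.

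Finally I would merge the two sums over the common range $0\le m\le k+1$. The main obstacle, and the genuinely pleasant surprise, is that the coefficient of $\frac{(f-P_m)^n}{A_{k+1;m}}$ turns out to equal $P_{k+1}$ for \emph{every} $m$: the interior indices $1\le m\le k$ receive $P_m$ from the $\sigma_q$-piece and $P_{k+1}-P_m$ from the identity-piece, while the boundary indices $m=0$ and $m=k+1$ each occur in only one sum and already carry $P_{k+1}$ (using $P_0=0$). This common factor $P_{k+1}$ cancels the $1/P_{k+1}$ in front, leaving the expected sum at level $k+1$. A short parity check then matches the sign: one verifies that $\frac{k(2n-k+1)}2+n+k$ and $\frac{(k+1)(2n-k)}2$ differ by the even integer $2k$, so they have equal parity, completing the induction.
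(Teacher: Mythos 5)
Your proof is correct and follows essentially the same route as the paper's: induction on $k$ via \eqref{equation:Lnk}, driven by the translation rule $x\sigma_q(f-P_j)=-(f-P_{j+1})$ and the identity $x^k\sigma_qA_{k;j}=\frac{(-1)^k}{P_{j+1}}A_{k+1;j+1}$, with the same sign recursion $\epsilon_{n,k+1}=(-1)^{n-k}\epsilon_{n,k}$. The only (cosmetic) difference is in how the two pieces are merged: the paper pairs the $j$-th $\sigma_q$-term with the $(j{+}1)$-th identity-term and treats the two boundary terms separately, whereas you put everything over the common denominators $A_{k+1;m}$ and observe that each coefficient equals $P_{k+1}$.
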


\begin{lemma}
 \label{lem:sym} Let $n$ be a nonnegative integer, $\mathfrak{C}$ an extension field of $\CC$, and let $(\alpha_0,\alpha_1,...,\alpha_n)\in\mathfrak{C}^{n+1}$ such that $\alpha_j\not=\alpha_{\ell}$ for $j\not=\ell$. Define the $(n+1)$-uplet $ (a_0,a_1,...,a_n)\in\mathfrak{C}^{n+1}$ as follows: 
 $$
 a_{j}=\prod_{\substack{\ell\not=j\\ 0\le\ell\le n}}\left( \alpha_j-\alpha_\ell\right)
 $$
 ($a_0=1$ if $n=0$).
 The following identity holds in the ring $\mathfrak{C}[T]$ of the polynomial functions over $\mathfrak{C}$:
 \begin{equation}
  \label{equation:Faj}
  \sum_{j=0}^n\frac{1}{a_j}\,(T-\alpha_j)^n=(-1)^{n}\,.
 \end{equation}

\end{lemma}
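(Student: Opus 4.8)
The statement is a purely algebraic identity in $\mathfrak{C}[T]$, and the plan is to recognize it as an instance of \emph{exact} Lagrange interpolation at the $n+1$ pairwise distinct nodes $\alpha_0,\dots,\alpha_n$, extracted by comparing leading coefficients.

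First I would treat $T$ as a parameter and introduce a fresh indeterminate $s$, considering the polynomial $\phi(s)=(T-s)^n$, which has degree exactly $n$ in $s$ with coefficients in the field $\mathfrak{C}(T)$. Over this field the nodes $\alpha_0,\dots,\alpha_n$ remain pairwise distinct (they already lie in $\mathfrak{C}$), so the Lagrange interpolation formula reconstructs $\phi$ exactly, precisely because $\deg_s\phi=n$ does not exceed $n$. Writing the fundamental Lagrange polynomials as $\ell_j(s)=\prod_{\ell\ne j}\frac{s-\alpha_\ell}{\alpha_j-\alpha_\ell}=\frac{1}{a_j}\prod_{\ell\ne j}(s-\alpha_\ell)$ and using $\phi(\alpha_j)=(T-\alpha_j)^n$, this yields
\begin{equation*}
(T-s)^n=\sum_{j=0}^n\frac{(T-\alpha_j)^n}{a_j}\prod_{\substack{\ell\ne j\\ 0\le\ell\le n}}(s-\alpha_\ell)
\end{equation*}
as an identity in $\mathfrak{C}(T)[s]$; since every coefficient appearing on the right is in fact a polynomial in $T$, it is an identity in $\mathfrak{C}[T,s]$.

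The key step is then to extract the coefficient of $s^n$ from both sides, which is a $\mathfrak{C}[T]$-linear operation and hence produces a genuine identity in $\mathfrak{C}[T]$. On the left, $(T-s)^n$ has $s^n$-coefficient $(-1)^n$. On the right, each factor $\prod_{\ell\ne j}(s-\alpha_\ell)$ is monic of degree $n$ in $s$, so its $s^n$-coefficient equals $1$, whence the $j$-th summand contributes exactly $\frac{(T-\alpha_j)^n}{a_j}$; summing over $j$ gives $\sum_{j=0}^n\frac{1}{a_j}(T-\alpha_j)^n$. Equating the two expressions is precisely \eqref{equation:Faj}. The degenerate case $n=0$ is handled directly by the stated convention $a_0=1$, since the sum then reduces to $(T-\alpha_0)^0=1=(-1)^0$.

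I do not expect a serious obstacle here; the only point demanding care is the bookkeeping, namely justifying that the interpolation identity—which is naturally an identity in the single variable $s$ over the coefficient field $\mathfrak{C}(T)$—may legitimately be regarded as a polynomial identity in the two variables $T$ and $s$, so that reading off the $s^n$-coefficient lands us back in $\mathfrak{C}[T]$ exactly as the statement requires. An alternative route, which I would keep in reserve, is to expand $(T-\alpha_j)^n$ by the binomial theorem and invoke the classical divided-difference (equivalently, partial-fraction) evaluation $\sum_{j=0}^n \alpha_j^{\,m}/a_j=\delta_{m,n}$ for $0\le m\le n$; this reaches the same conclusion but is computationally heavier than the single leading-coefficient comparison above.
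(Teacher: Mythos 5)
Your proof is correct and coincides with the paper's second proof of this lemma (attributed to J.~Sauloy): apply Lagrange interpolation over the field $\mathfrak{C}(T)$ to $(T-X)^n$ at the nodes $\alpha_0,\dots,\alpha_n$ and compare the coefficients of $X^n$. The paper also records an alternative proof by induction on $n$, but your route is exactly its Lagrange-polynomial argument, down to the leading-coefficient extraction.
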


\section{Proofs}\label{sec:proof}
\begin{proof}[\bf Proof of Lemma \ref{lem:Akj}] To simplify the presentation, we will write $L_{n,k}$ in stead of $L_{n,k}^P$, and set  
$\displaystyle\epsilon_{n,k}=(-1)^{k(2n-k+1)/2}$.
One has:
\begin{equation}
 \label{equation:ekj}
 \epsilon_{n;k+1}=(-1)^{n-k}\,\epsilon_{n,k}\,.
\end{equation}

Let us proceed by induction on $k$.
For $k=1$, since $P_1=P$, one has: 
$$\epsilon_{n,1}=(-1)^n;\quad 
A_{1;0}=-P\,,\quad A_{1;1}=P\,.$$
Thus, \eqref{equation:Ln1} implies that
$$L_{n,1}(f^n)=\frac{1}{P}\left(x^n\sigma_q(f^n)-(-1)^nf^n\right)\,.
$$
At the same time, by considering \eqref{equation:f}, one gets that 
$$
x^n\sigma_q (f^n)=(x\sigma_q f)^n=(P-f)^n\,;
$$
In this way, it follows that
$$
L_{n,1}(f^n)=\frac{\epsilon_{n,1}}{P}\left((f-P)^n-f^n\right)\,,
$$
that is exactly what wanted in \eqref{equation:Lnkf}
for $k=1$.

Now, suppose that equality \eqref{equation:Lnkf} holds for some integer $k$ between $1$ and $n-1$. By using relation \eqref{equation:Lnk}, one can express $L_{n,k+1}(f^n)$ as follows:
\begin{equation}
 \label{equation:Lk+1}
L_{n,k+1}(f^n)=\frac{\epsilon_{n,k}}{P_{k+1}}\sum_{j=0}^k\left(B_{n,k;j}-C_{n,k;j}\right)\,,
\end{equation}
where 
\begin{equation}
 \label{equation:B} B_{n,k;j}=x^{n-k}\sigma_q\left(\frac{1}{A_{k;j}}\left(f-P_j\right)^n\right)
\end{equation}
and
\begin{equation}
 \label{equation:C}  C_{n,k;j}=\frac{(-1)^{n-k}}{A_{k;j}}\left(f-P_j\right)^n.
\end{equation}
In view of \eqref{equation:Pn}, one finds that $x\sigma_qP_j=P-P_{j+1}$, hence:
$$x\sigma_q(f-P_j)=x\sigma_q f-x\sigma_qP_j=(P-f)-(P-P_{j+1})=-(f-P_{j+1}).
$$
Thus, it follows that 
$$
B_{n,k;j}=\frac{\left(x\sigma_q(f-P_j)\right)^n}{x^k\sigma_qA_{k;j}}=\frac{(-1)^n}{x^k\sigma_qA_{k;j}}(f-P_{j+1})^n\,.
$$
By using the expression of $A_{k;j}$ given in \eqref{equation:Akj}, one has :
$$
x^k\sigma_qA_{k;j}=\prod_{\substack{\ell\not=j\\ 0\le\ell\le k}}\left(x\sigma_q( P_j-P_\ell)\right)=(-1)^k\prod_{\substack{\ell\not=j\\ 0\le\ell\le k}}\left( P_{j+1}-P_{\ell+1}\right).
$$
This is to say that
\begin{equation*}
 \label{equation:Akj+1}
 x^k\sigma_qA_{k;j}=\frac{(-1)^k}{P_{j+1}}\,A_{k+1,j+1}\,.
\end{equation*}
So, one can write \eqref{equation:B} as follows:
\begin{equation}
 \label{equation:Bkj+1}
 B_{n,k;j}=\frac{(-1)^{n-k}P_{j+1}}{A_{k+1;j+1}}(f-P_{j+1})^n\,.
\end{equation}

Rewrite \eqref{equation:Lk+1} into the following form:
\begin{equation}
 \label{equation:Lk+1a}
L_{n,k+1}(f^n)=\frac{\epsilon_{n,k}}{P_{k+1}}\left(\sum_{j=0}^{k-1}\left(B_{n,k;j}-C_{n,k;j+1}\right)+B_{n,k;k}-C_{n,k;0}\right)\,.
\end{equation}
Let $j<k$; one has: 
$$A_{k+1,j+1}=(P_{j+1}-P_{k+1})\,A_{k,j+1}.
$$
Replace $j$ by $j+1$  in \eqref{equation:C}, and make use of \eqref{equation:Bkj+1}. One gets that
$$
B_{n,k;j}-C_{n,k;j+1}=\frac{(-1)^{n-k}P_{k+1}}{A_{k+1;j+1}}(f-P_{j+1})^n\,.
$$
Furthermore, one sees that $A_{k+1;0}=-P_{k+1}\,A_{k;0}$.
By letting $j=k$ and $j=0$ in \eqref{equation:Bkj+1} and \eqref{equation:C} respectively, one has:
$$
B_{n,k;k}=\frac{(-1)^{n-k}P_{k+1}}{A_{k+1;k+1}}(f-P_{k+1})^n$$
and
$$
C_{n,k;0}=-\frac{(-1)^{n-k}P_{k+1}}{A_{k+1;0}}(f-P_0)^n\,.
$$
Thus, in view of  \eqref{equation:ekj}, equality \eqref{equation:Lk+1a} implies that
$$
L_{n,k+1}(f^n)=\epsilon_{n,k+1}\,\sum_{j=0}^{k+1}\frac{1}{A_{k+1;j}}\left(f-P_j\right)^n\,,
$$
which corresponds to
\eqref{equation:Lnkf} in which $k$ was replaced by  $k+1$.
\end{proof}

\begin{proof}[\bf Proof of Lemma \ref{lem:sym}] We give here two proofs.

{\it Proof by induction --}
If $n=0$, as $a_0=1$,  equality \eqref{equation:Faj} becomes evident.
Suppose that \eqref{equation:Faj} holds for some index $n\ge 0$, and consider an $(n+2)$-uplet $(\alpha_0,...,\alpha_{n+1})$ where, as before, $\alpha_j\not=\alpha_\ell$ if $j\not=\ell$. Set
$$
\mathcal{P}(T)=\sum_{j=0}^{n+1}\frac{1}{\tilde a_j}\,(T-\alpha_j)^{n+1},
\quad  \tilde a_j=\prod_{\substack{\ell\not =j\\ 0\le \ell\le n+1}}(\alpha_j-\alpha_\ell)
.$$
Let $k$ be an integer such that $0\le k\le n+1$. If one writes $$\displaystyle 
a_{k;j}=\prod_{\substack{\ell\not =k,\ell\not =j\\ 0\le \ell\le n+1}}(\alpha_j-\alpha_\ell)
\,,$$ one has $a_{k;j}=\tilde a_j/(\alpha_j-\alpha_k)$ for $j\not=k$, what implies that
\begin{equation}
 \label{equation:Pk}
\mathcal{P}(\alpha_k)=-\sum_{\substack{j\not =k\\ 0\le j\le n+1}}\frac{1}{a_{k;j}}\,(\alpha_k-\alpha_j)^{n}.
\end{equation}
By applying the induction hypothesis to the $(n+1)$-uplet $(\alpha_0,...,\alpha_{k-1},\alpha_{k+1},...\alpha_{n+1})$ appeared in the right hand side of \eqref{equation:Pk}, one obtains that  $\mathcal{P}(\alpha_k)=(-1)^{n+1}$.
As $\mathcal{P}$ is a polynomial with $\deg \mathcal{P}\le n+1$, one finds that $\mathcal{P}(T)=(-1)^{n+1}$ identically. 

\medskip

{\it Proof by Lagrange polynomials\footnote{We would like to thank our friend and collaborator J. Sauloy for this elegant proof.} --} Let $\mathbb{K}=\mathfrak{C}(T)$ be the field of the rational functions over $\mathfrak{C}$. Let $X$ be a new indeterminate, and set
$$F(X) = (T - X)^n,\quad  \Lambda_j(X) = \prod_{\substack{\ell\not =j\\ 0\le \ell\le n}}(X- \alpha_\ell)\,.
$$
By noticing that $F(X)\in\mathbb{K}[X]$ be such that $\deg F = n < n+1$, the Lagrange interpolation formula \cite[Chap. IV, \S 2, p. 26]{Bourbaki} implies that
$$
F(X) = \sum_{j=0}^n\frac{ F(\alpha_j)}{a_j} \Lambda_j(X)\,.
$$
And now one compares the  coefficients of $X^n$ for both sides of the above equality. As that of $\Lambda_j$ is equal to 1, one finds that
$$
(-1)^n =\sum_{j=0}^n\frac{F( \alpha_j)}{a_j}= \sum_{j=0}^n\frac{(T - \alpha_j)^n}{a_j},
$$
which is exactly the expected identity \eqref{equation:Faj}.
\end{proof}

\begin{proof}[\bf End of the proof of Theorem \ref{thm:fn}]
In view of Lemma \ref{lem:Akj}, letting $k=n$ in \eqref{equation:Lnkf} yields that
 $$
 L_{n,n}^P\left(f^n\right)=(-1)^{n(n+1)/2}\sum_{j=0}^n\frac{1}{A_{n;j}}\left(f-P_j\right)^n\,,
 $$
 where
 $$
  A_{n;j}=\prod_{\substack{\ell\not=j\\ 0\le\ell\le n}}\left( P_j-P_\ell\right).
 $$
 By
 \eqref{equation:Pmn}, one knows  that  $P_j\not=P_\ell$ for  $j\not=\ell$. Thus, by applying Lemma \ref{lem:sym} with $\mathfrak{C}=\CC[x^{-1}][[x]]$, $X=f$ and $\alpha_j=P_j$, one gets that
 $$
 \sum_{j=0}^n\frac{1}{A_{n;j}}\left(f-P_j\right)^n=(-1)^{n}.
 $$
 This permits to finish the proof of Theorem \ref{thm:fn}.
\end{proof}

\section{One generalisation of Theorem \ref{thm:fn}}\label{sec:gen}

Instead of  \eqref{equation:f}, let us consider the following slightly more general $q$-difference equation:
\begin{equation}
  \label{equation:ab}
  \alpha\,\sigma_qy+y=\beta\,,
 \end{equation}
 where both $\alpha$ and $\beta$
 are nonzero power series belonging to the field $\CC[x^{-1}][[x]]$. In the same spirit as in \eqref{equation:Pn}, define the sequence $\{\beta_n\}_{n\ge 0}$ in $\CC[x^{-1}][[x]]$ as follows:
 \begin{equation}
  \label{equation:beta}
  \beta_0=0;\qquad \beta_n=\sum_{k=0}^{n-1}\left(-\alpha\,\sigma_q\right)^k\beta\,,\quad n\ge 1\,.
 \end{equation}

 \begin{proposition}
  \label{prop:ab} For any given positive integer $n$, the following conditions are equivalent.
  \begin{enumerate}
  \item $\beta_j\not= 0$ for any positive integer $j$ such that $0< j\le n$. 
   \item $\beta_j\not=\beta_\ell$ for any couple  of integers $(j,\ell)$ such that $0\le \ell<j\le n$. 
  \end{enumerate}

 \end{proposition}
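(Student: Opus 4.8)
The plan is to reduce both conditions to a single \emph{shift identity} for the operator $-\alpha\,\sigma_q$ acting on the differences $\beta_j-\beta_\ell$. The implication $(2)\Rightarrow(1)$ is immediate: taking $\ell=0$ in $(2)$ and recalling that $\beta_0=0$ gives $\beta_j\neq 0$ for $0<j\le n$. All the content lies in $(1)\Rightarrow(2)$, and the mechanism driving it is that $-\alpha\,\sigma_q$ transports the difference of two indexed terms to the difference of the next pair.

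Concretely, I would first record that, directly from \eqref{equation:beta},
$$
(-\alpha\,\sigma_q)\,\beta_j=\sum_{k=0}^{j-1}(-\alpha\,\sigma_q)^{k+1}\beta=\beta_{j+1}-\beta_1\,,
$$
so that for every pair $(j,\ell)$ one has the shift identity
$$
(-\alpha\,\sigma_q)\bigl(\beta_j-\beta_\ell\bigr)=\beta_{j+1}-\beta_{\ell+1}\,.
$$
Since $\alpha\neq 0$ and $\sigma_q$ is an automorphism of the field $\CC[x^{-1}][[x]]$, the operator $-\alpha\,\sigma_q$ is injective on this field; hence the right-hand side vanishes if and only if its argument does. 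This yields the key equivalence
$$
\beta_j=\beta_\ell\quad\Longleftrightarrow\quad\beta_{j+1}=\beta_{\ell+1}\,.
$$

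With this in hand I would finish $(1)\Rightarrow(2)$ by descent. Assuming $(1)$, suppose for contradiction that $\beta_j=\beta_\ell$ for some $0\le\ell<j\le n$. Applying the equivalence above in the backward direction $\ell$ times lowers both indices simultaneously, producing $\beta_{j-\ell}=\beta_0=0$ with $0<j-\ell\le n$, which contradicts $(1)$. Hence no such coincidence can occur, and $(2)$ holds.

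The argument is short once the shift identity is isolated, so I expect no real obstacle beyond that recognition; the only points requiring a word of care are the injectivity of $-\alpha\,\sigma_q$, which is exactly where the hypothesis $\alpha\neq 0$ (together with $\CC[x^{-1}][[x]]$ being a field on which $\sigma_q$ is bijective) enters, and the bookkeeping of the descent so that the resulting index $j-\ell$ stays within the range $(0,n]$.
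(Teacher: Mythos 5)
Your proof is correct and takes essentially the same route as the paper's: the paper writes the closed-form identity $\beta_j-\beta_\ell=(-\alpha\,\sigma_q)^{\ell}\beta_{j-\ell}$ in one step and invokes the bijectivity of $(-\alpha\,\sigma_q)^{\ell}$, whereas you reach the same conclusion by iterating the one-step shift identity $\ell$ times. The substance --- reducing $\beta_j=\beta_\ell$ to $\beta_{j-\ell}=0$ via injectivity of $-\alpha\,\sigma_q$ on the field $\CC[x^{-1}][[x]]$ --- is identical.
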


 \begin{proof}

 Indeed, by \eqref{equation:beta}, it follows that 
 \begin{equation}
  \label{equation:betajk}
 \beta_j-\beta_\ell=\sum_{k=\ell}^{j-1}\left(-\alpha\,\sigma_q\right)^k\beta=(-\alpha\,\sigma_q)^{\ell}\beta_{j-\ell}\,.
 \end{equation}
 As $\alpha\not=0$, l'operator $(-\alpha\,\sigma_q)$ is an automorphism on the $\CC$-vector space $\CC[x^{-1}][[x]]$. So, this is the same for its $\ell$-th power or iteration $(-\alpha\,\sigma_q)^{\ell}$. This implies the equivalence between the conditions stated in Proposition \ref{prop:ab}.  
 \end{proof}

 Given a nonzero $f\in\CC[x^{-1}][[x]]$, one remembers that $\nu_0(f)$ denotes the valuation of $f$ at $x=0$, that is the lowest degree of the terms  of $f$.
 By using \eqref{equation:beta}, one obtains that, for $n>0$:
 $$
 \nu_0(\beta_n)=\left\{ 
 \begin{array}{ll}
  \nu_0(\beta)&\textrm{if}\ \nu_0(\alpha)>0;\\
  (n-1)\nu_0(\alpha)+\nu_0(\beta)&\textrm{if}\ \nu_0(\alpha)<0.
 \end{array}
\right.
 $$
 This gives the following statement for $\alpha$, $\beta\in\CC[x^{-1}][[x]]\setminus\{0\}$.
 
 \begin{remark}
  \label{rem:ab} Both conditions considered in Proposition \ref{prop:ab} are necessarily fulfilled if $\nu_0(\alpha)\not=0$. 
 \end{remark}

 \begin{theorem}
  \label{thm:gen}  Let $n\in\ZZ_{>0}$, and let $\alpha$, $\beta$ and $f\in\CC[x^{-1}][[x]]$. Suppose that $f$ satisfies the $q$-difference equation in \eqref{equation:ab} and that $\beta_j\not= 0$ for any positive integer $j$ such that $0< j\le n$. One has:
\begin{equation}
 \label{equation:abfn}
 L_{n}^{\alpha,\beta}(f^n)=(-1)^{n(n-1)/2}\,,
\end{equation}
where $L_{n}^{\alpha,\beta}$ is the $n$-th order $q$-difference operator defined by
  \begin{equation}
   \label{equation:abL}
   L_{n}^{\alpha,\beta}=\frac{1}{\beta_n}\left(\alpha\,\sigma_q+1\right)\frac{1}{\beta_{n-1}}\left(\alpha^2\,\sigma_q-1\right)\cdots \frac{1}{\beta_1}\left(\alpha^n\,\sigma_q-(-1)^n\right)\,.
  \end{equation}
 \end{theorem}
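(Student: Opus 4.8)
The plan is to run verbatim the argument that established Theorem \ref{thm:fn}, now with $x$ replaced by $\alpha$, with $P$ replaced by $\beta$, and with the family $\{P_j\}$ replaced by $\{\beta_j\}$ from \eqref{equation:beta}. The whole point is that the proofs of Lemma \ref{lem:Akj} and of Theorem \ref{thm:fn} never exploited anything particular about the monomial $x$ beyond two structural facts: that $\sigma_q$ is a ring homomorphism, so that $\alpha^n\sigma_q(g^n)=(\alpha\,\sigma_q g)^n$; and the recursion $x\sigma_q P_j=P-P_{j+1}$. The first fact is universal, and the second has an exact analogue here. Indeed, taking $\ell=1$ and replacing $j$ by $j+1$ in \eqref{equation:betajk} gives $\beta_{j+1}-\beta=-\alpha\,\sigma_q\beta_j$, that is $\alpha\,\sigma_q\beta_j=\beta-\beta_{j+1}$; combined with $\alpha\,\sigma_q f=\beta-f$ from \eqref{equation:ab} this yields
$$\alpha\,\sigma_q(f-\beta_j)=-(f-\beta_{j+1}),$$
the precise counterpart of the identity $x\sigma_q(f-P_j)=-(f-P_{j+1})$ on which the induction in Lemma \ref{lem:Akj} rested.

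Next I would introduce the partial operators $L_{n,k}^{\alpha,\beta}$ by the obvious analogues of \eqref{equation:Ln1}--\eqref{equation:Lnk}, so that $L_{n,n}^{\alpha,\beta}=L_n^{\alpha,\beta}$, and prove by induction on $k$, following the proof of Lemma \ref{lem:Akj} line by line, the closed form
$$L_{n,k}^{\alpha,\beta}(f^n)=(-1)^{k(2n-k+1)/2}\sum_{j=0}^k\frac{1}{\tilde A_{k;j}}\,(f-\beta_j)^n,\qquad \tilde A_{k;j}=\prod_{\substack{\ell\neq j\\0\le\ell\le k}}(\beta_j-\beta_\ell).$$
The base case $k=1$ uses only $\alpha^n\sigma_q(f^n)=(\alpha\,\sigma_q f)^n=(\beta-f)^n$, and the inductive step reproduces exactly the manipulations of the quantities $B_{n,k;j}$ and $C_{n,k;j}$, whose sole inputs are the recursion displayed above together with the multiplicativity identity $\alpha^k\sigma_q\tilde A_{k;j}=(-1)^k\prod_{\ell\neq j}(\beta_{j+1}-\beta_{\ell+1})$, valid because $\tilde A_{k;j}$ is a product of exactly $k$ factors.

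Finally, setting $k=n$ produces
$$L_n^{\alpha,\beta}(f^n)=(-1)^{n(n+1)/2}\sum_{j=0}^n\frac{1}{\tilde A_{n;j}}\,(f-\beta_j)^n.$$
To evaluate this sum via Lemma \ref{lem:sym} with $\mathfrak{C}=\CC[x^{-1}][[x]]$, $T=f$ and $\alpha_j=\beta_j$, I need $\beta_0,\dots,\beta_n$ to be pairwise distinct; but this is precisely condition (2) of Proposition \ref{prop:ab}, which the hypothesis $\beta_j\neq 0$ for $0<j\le n$ guarantees through the equivalence proved there. Lemma \ref{lem:sym} then returns the value $(-1)^n$ for the sum, and since $(-1)^{n(n+1)/2}(-1)^n=(-1)^{n(n-1)/2}$, the desired identity \eqref{equation:abfn} follows.

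The one place demanding genuine care is the inductive step of the closed form: one must check that the passage from the monomial $x$ to an arbitrary series $\alpha$ does not disturb the bookkeeping of exponents, i.e. that every power of $x$ appearing in the proof of Lemma \ref{lem:Akj} is matched by an equal number of factors to which $\sigma_q$ is applied (as in rewriting $\alpha^{n-k}\sigma_q$ as $\alpha^{n}/\alpha^{k}$ acting on an $n$-fold and a $k$-fold product respectively). Once this matching is confirmed factor by factor, nothing in the computation is specific to $x$, and the distinctness of the $\beta_j$ furnished by Proposition \ref{prop:ab} closes the argument.
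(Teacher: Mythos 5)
Your proposal is correct and follows essentially the same route as the paper, which itself simply defines $A_{n;j}$ with $\beta_j$ in place of $P_j$, invokes Proposition \ref{prop:ab} to get $A_{n;j}\neq 0$, and says to proceed as in the proof of Theorem \ref{thm:fn} with the details omitted. You have in fact supplied those omitted details correctly, in particular the key recursion $\alpha\,\sigma_q\beta_j=\beta-\beta_{j+1}$ and the sign bookkeeping $(-1)^{n(n+1)/2}(-1)^n=(-1)^{n(n-1)/2}$.
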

 
 \begin{proof} Replace $(k,P)$ with $(n,\beta)$ in  \eqref{equation:Akj}, and define:
  $$A_{n;j}=\prod_{\substack{\ell\not=j\\ 0\le\ell\le n}}\left( 
  \beta_j-\beta_\ell\right)\,,\quad 0\le j\le n.
  $$
  By taking into account Proposition \ref{prop:ab}, it follows that $A_{n;j}\not=0$. Thus, one might proceed in the same way as for the proof of Theorem   \ref{thm:fn}. We omit the details.
 \end{proof}

\section{About the summability of the powers of $q$-Euler series}\label{sec:Rq}

Let us come back to the power series $\hat E_q(x)$ defined by \eqref{equation:Eqx}, which satisfies the $q$-difference equation stated in \eqref{equation:Euler}. Letting $P=1$, Theorem \ref{thm:fn} implies immediately the following result.
\begin{remark}
Given any integer $n\ge 2$, the $n$-th power $\hat E_q(x)^n$ satisifies the following identity:
\begin{equation}
 \label{equation:Eulern}
\left(x\sigma_q+1\right)\,
 \frac1{P_{n-1}}\,\left(x^2\sigma_q-1\right)\,...(x^n\sigma_q-(-1)^n)\hat E_q(x)^n=(-1)^{n(n-1)/2}\,P_n\,,
\end{equation}
where $P_k= 1-x+...+q^{(k-1)(k-2)}(-x)^{k-1}$ for $1\le k\le n$.

In particular, when $n=2$, as $P_2=1-x$, \eqref{equation:Eulern} takes the form of   \eqref{equation:E2}. 
\end{remark}

Furthermore, it might be amusing to notice that, if $q=1$, one has $\displaystyle\hat E_1(x)=\frac{1}{1+x}$ and, in this case, the above identity in \eqref{equation:Eulern} is simply equivalent to the following elementary relation:
$$
\prod_{k=1}^n\frac{x^k-(-1)^k}{\sum_{j=0}^{k-1}(-x)^j}=(-1)^{n(n-1)/2}\,(x+1)^n\,.
$$ This observation related with the specific case of $q=1$ may be also made for Theorem \ref{thm:gen}, by assuming, for example, both $\alpha$ and $\beta$ to belong to the field $\CC[x^{-1}]\{x\}$ of the germs of meromorphic functions at $x=0$ in $\CC$ or to the sub-field $\CC(x)$ of the rational functions over $\CC$.

Our next remark goes to another $q$-analog of the following so-called {Euler series}:
\begin{equation}
 \label{equation:serieEuler}
\hat E(x) = \sum_{n\ge 0}(-1)^n\,n!\,x^{n+1}\,.
\end{equation}
This power series is divergent for all $x\in\CC\setminus\{0\}$ but Borel-summable in every direction excepted $\RR^-$. It satisfies the following first order ODE:
\begin{equation}
  \label{equation:eqEuler}
(x\,\delta+1) y=x, \quad \delta=x\partial_x=x\frac{d\ }{dx}.
 \end{equation}
 Letting $Y=\hat E(x)^2$, one can check that
\begin{equation}
 \label{equation:eqEuler2}
 (x\,\delta+1-x)\,(x\,\delta+2)Y=2x^2.
\end{equation}
Indeed, if $y=\hat E(x)$, one deduces from \eqref{equation:eqEuler} that $x\delta(y^2)=2x\,y\,\delta y=2y(x-y)=2xy-2y^2$. This means that $(x\delta+2)(y^2)=2x\,y$. Thus, applying again \eqref{equation:eqEuler} yields that
$\displaystyle
(x\,\delta+1)\,\frac{1}{2x}\,(x\,\delta+2)\,y^2=x\,.
$ In this way, one gets \eqref{equation:eqEuler2}, using the identity $\displaystyle(x\,\delta+1)\,\frac{1}{2x}=\frac{1}{2x}\,(x\,\delta+1-x)$ in the non-commutative ring  $\CC(x)[\delta]$ of the differential operators over $\CC(x)$. 

Besides, the correspondances
 $\displaystyle
 n \leftrightarrow \frac{1-q^n}{1-q}$ and $\displaystyle n! \leftrightarrow \frac{(1-q)...(1-q^n)}{(1-q)^n}
 $ suggest one to consider
 the following $q$-analog of the Euler series:
 \begin{equation}
  \label{equation:Exq}
\hat E(x;q)=x+
 \sum_{n\ge 1}(-1)^n\frac{(1-q)...(1-q^n)}{(1-q)^n}x^{n+1}.
 \end{equation}
 This can be written in term of a basic hypergeometric  series as follows: 
 $$\hat E(x;q)=\displaystyle x\,{}_2\phi_1(q,q;0;q,-\frac{x}{1-q})=x\,{}_2\phi_0(q^{-1},q^{-1};-;q^{-1},\frac{x}{1-q^{-1}}).
 $$
 See \cite[(1.2.22), p. 4]{GR} for the general definition of ${}_r\phi_s(...;...;q,z)$.
 
 Let  $\displaystyle \delta_q=\frac{\sigma_q-1}{q-1}=x\,\Delta_q$, where $\Delta_q$ is defined in \cite[p. 488, (10.2.3)]{AAR}. By observing that
 $\displaystyle n=\frac{\delta x^n}{x^n}$ and $\displaystyle \frac{1-q^n}{1-q}=\frac{\delta_q x^n}{x^n}$, a direct computation shows that
 $\hat E(x;q)$ satisfies the $q$-analog of \eqref{equation:eqEuler} as follows:
 $\displaystyle
 (x\,\delta_q+1)y=x\,.
 $ If one writes
 \begin{equation}
  \label{equation:ab1}\alpha=\frac{x}{q-1-x}\,,\quad \beta=(q-1)\,\alpha\,,
 \end{equation}it follows from the above that
 \begin{equation}
  \label{equation:Eulerq}\left(\alpha\,\sigma_q+1\right)\hat E(x;q)=\beta\,.
 \end{equation} 

 \begin{remark}
  \label{rem:qEuler2} Let $\alpha$ be as in \eqref{equation:ab1}. The  following identity holds in the field $\CC[x^{-1}][[x]]$:
  \begin{equation}
   \label{equation:qEuler2a}\left(\alpha\,\delta_q+\frac{1}{q-1-x}\right)\,\left(\alpha\,\delta_q-\frac1x+\frac{1}{q-1-x}\right)\hat E(x;q)^2=\alpha\,(\sigma_q\alpha-1)\,.
  \end{equation}
Moreover, when $q\to1$, \eqref{equation:qEuler2a} is reduced into the following equivalent form of  \eqref{equation:eqEuler2}:
\begin{equation}
 \label{equation:eqEuler2a}
 \left(\delta+\frac{1}{x}\right)\,\left(\delta+\frac{2}{x}\right)\hat E(x)^2=2\,.
\end{equation}

 \end{remark}
 
 To obtain \eqref{equation:qEuler2a}, one can apply \eqref{equation:beta} for $n=1$ and $n=2$, where $\alpha$ and $\beta$ are defined by \eqref{equation:ab1}. This gives that
 $$\beta_1=(q-1)\,\alpha,\quad \beta_2=\beta-\alpha\,\sigma_q\beta=(q-1)\,\alpha\,\left(1-\sigma_q\alpha\right)\,.
 $$
 Thus, it follows from applying Theorem \ref{thm:gen} to \eqref{equation:Eulerq} that $\hat E(x;q)^2$ satisfies the following $q$-difference equation:
\begin{equation}
  \label{equation:Euler2La}
 L\left(\hat E(x;q)^2\right)=\alpha\,(\sigma_q\alpha-1)\,,
 \end{equation}
 where 
 \begin{equation}
  \label{equation:Euler2L}
 L=\frac{1}{(q-1)^2}\,\left(\alpha\,\sigma_q+1\right)\,\left(\alpha\,\sigma_q-\frac{1}{\alpha}\right)\,.
 \end{equation}
 By replacing $\sigma_q$ with $(q-1)\delta_q+1$ in \eqref{equation:Euler2L}, one gets that
 \begin{equation*}
  \label{equation:Euler2Lb}
 L=\left(\alpha\,\delta_q+\frac{1+\alpha}{q-1}\right)\,\left(\alpha\,\delta_q+\frac{\alpha^2-1}{(q-1)\,\alpha}\right).
 \end{equation*}
 As $\displaystyle\alpha+1=\frac{q-1}{q-1-x}$ and $\displaystyle\alpha^2-1=\frac{(q-1)(2x-q+1)}{(q-1-x)^2}$, one deduces immediately \eqref{equation:qEuler2a} from \eqref{equation:Euler2La}. The limit equation form given in \eqref{equation:eqEuler2a} can be obtained from \eqref{equation:qEuler2a}  by noticing both limits $\alpha\to-1$ and $\delta_q\to\delta$ for $q\to1$.

In the rest of this paper, we will suppose that $q>1$ even if, in most cases, the hypothesis $|q|>1$ may be really enough; see \cite{Ro} and \cite{RSZ}. Let us recall some results about the summability of power series solutions of $q$-difference equations. Given any linear $q$-difference operator $L$ of the following form:
$$
L=\sum_{j=0}^na_j\sigma_q^j\in\CC\{x\}[\sigma_q]\,,\quad a_0a_n\not=0\,,
$$
one definies its associated Newton polygon $\mathcal{NP}(L)$ as being the convex hull of the set $\{(j,m):m\ge \nu_0(a_j)\}$ in the strip $[0,n]\times [0,+\infty)$. 
 By following \cite[(3.1.2)]{MZ} and \cite[Proposition 5.14]{Zhang1}, one knows that, 
if   $\mathcal{NP}(L)$ admits only integer slopes, saying $\kappa_1\le \kappa_2\le ...\le \kappa_n$, then there exist  $\nu\in\ZZ$, $\left(h_1, h_2, ..., h_n\right)\in\left(1+x\CC\{x\}\right)^n$ and $(c_0,c_1, ..., c_n)\in\left(\CC\setminus\{0\}\right)^{n+1}$ such that
\begin{equation}
 L=c_0\,x^\nu\,h_1\,\left(x^{\kappa_1}\sigma_q+c_1\right)\,h_2\,\left(x^{\kappa_2}\sigma_q+c_2\right)\,...\,h_n\,\left(x^{\kappa_n}\sigma_q+c_n\right)\,.
\end{equation}
Furthermore, let $K^+=\ZZ_{>0}\cap\{\kappa_j:1\le j\le n\}$, and define $\vec k$  as follows:
\begin{equation}
 \vec k=\left\{ 
 \begin{array}{ll}\emptyset
  &\textrm{if}\quad K^+=\emptyset\\
  (k_1,...,k_m)& \textrm{if}\quad K^+=\{k_\ell: 1\le \ell\le m\} \ \textrm{and}\ k_1<...<k_m\,.
 \end{array}
\right. 
\end{equation}
By considering \cite[\S~3.3.5]{MZ}, it follows that any power series $\hat f\in\CC[[x]]$ such that $L(\hat f)\in\CC\{x\}$ remains convergent or is $Gq$-summable of order $\vec k=(k_1,...k_m)\in\ZZ_{>0}^m$ depending on whether $\vec k=\emptyset$ or not.

\begin{theorem}\label{thm:summability}
 Given any integer $n\ge 2$, the $n$-th power $\hat E_q(x)^n$ is $Gq$-summable of order $(1,2,...,n)$.
 And this is the same for $\hat E(x;q)^n$.
\end{theorem}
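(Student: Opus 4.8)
The plan is to deduce both statements from the factorisation/slope criterion recalled just above the theorem: if a holomorphic operator $L\in\CC\{x\}[\sigma_q]$ with $a_0a_n\neq 0$ has a Newton polygon with only integer slopes and $L(\hat f)\in\CC\{x\}$, then $\hat f$ is $Gq$-summable of order $\vec k$, the sequence of its distinct positive integer slopes. I would therefore produce, for each of $\hat E_q(x)^n$ and $\hat E(x;q)^n$, a holomorphic operator annihilating it modulo $\CC\{x\}$ whose slopes are exactly $1,2,\dots,n$; this immediately yields $\vec k=(1,2,\dots,n)$ and hence the claimed summability (the set being nonempty).

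For $\hat E_q(x)^n$ I would take $P=1$ in Theorem \ref{thm:fn}, so that $P_k=\sum_{j=0}^{k-1}(-x)^jq^{j(j-1)/2}$ satisfies $P_k(0)=1$; hence each $1/P_k$ is a unit of $\CC\{x\}$ and $L_{n,n}^P\in\CC\{x\}[\sigma_q]$. The factored form \eqref{equation:Ln} is already the canonical form of the criterion, with units $h_i=1/P_{n+1-i}\in 1+x\CC\{x\}$ and first-order factors $(x^i\sigma_q-(-1)^i)$ of slope $\kappa_i=i$, arranged in increasing order $1\le 2\le\dots\le n$. Since $L_{n,n}^P(\hat E_q(x)^n)=(-1)^{n(n-1)/2}\in\CC\{x\}$ by \eqref{equation:fn}, the criterion applies and gives $\vec k=(1,2,\dots,n)$.

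For $\hat E(x;q)^n$ I would start from \eqref{equation:Eulerq}, that is, from Theorem \ref{thm:gen} with $\alpha$, $\beta$ as in \eqref{equation:ab1}. Here $\nu_0(\alpha)=1$, so $\alpha^j=(q-1-x)^{-j}x^j$ is a unit times $x^j$, and the first-order factor $(\alpha^j\sigma_q-(-1)^j)$ has Newton slope $\nu_0(\alpha^j)-\nu_0((-1)^j)=j$. By Remark \ref{rem:ab} one has $\nu_0(\beta_k)=\nu_0(\beta)=1$ for $1\le k\le n$, so the scalar factors $1/\beta_k$ carry simple poles and $L_n^{\alpha,\beta}$ is only meromorphic; I would clear these by passing to $M=x^nL_n^{\alpha,\beta}$. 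A short valuation count shows that in $L_n^{\alpha,\beta}$ the coefficient of $\sigma_q^0$, namely the product of all constants $-(-1)^j$ with all $1/\beta_k$, has valuation $-n$, while every other coefficient has valuation $>-n$, because choosing the $\alpha^\ell$-part from any main factor raises the valuation by $\ell\ge 1$. Hence $M\in\CC\{x\}[\sigma_q]$ with $a_0(0)\neq 0$ and $a_n\neq 0$, and $M(\hat E(x;q)^n)=(-1)^{n(n-1)/2}x^n\in\CC\{x\}$ by \eqref{equation:abfn}. Left multiplication by $x^n$ translates the Newton polygon vertically without altering its slopes, which remain $1,2,\dots,n$; the criterion then yields $\vec k=(1,2,\dots,n)$ again.

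The main obstacle I expect is precisely the passage from the meromorphic operator $L_n^{\alpha,\beta}$ to a holomorphic one with the same slopes: one must check that the simple poles carried by the interspersed factors $1/\beta_k$ and the zeros of order $j$ carried by the factors $\alpha^j$ recombine so that the multiset of slopes is exactly $\{1,2,\dots,n\}$. Conceptually this rests on the additivity of Newton slopes under composition of $q$-difference operators, a scalar factor $1/\beta_k$ contributing no slope but only a vertical translation, as in \cite[Proposition 5.14]{Zhang1} and \cite[\S 3.3.5]{MZ}; making the valuation bookkeeping of $M=x^nL_n^{\alpha,\beta}$ fully explicit is the only genuinely computational point, the case $n=2$ being exactly the operator displayed in \eqref{equation:qEuler2a}.
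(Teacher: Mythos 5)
Your proposal is correct and follows essentially the same route as the paper, whose proof is a one-line appeal to \eqref{equation:Eulern} (resp.\ to Theorem \ref{thm:gen} applied to \eqref{equation:Eulerq} via Remark \ref{rem:ab}) combined with the Newton-polygon summability criterion recalled just before the statement. The only difference is that you make explicit the valuation bookkeeping showing that the meromorphic operator $L_n^{\alpha,\beta}$, after multiplication by $x^n$, lies in $\CC\{x\}[\sigma_q]$ with $a_0a_n\not=0$ and slopes exactly $1,2,\dots,n$ --- a check the paper leaves implicit --- and your count is accurate since the minimal-valuation contribution to each $\sigma_q^m$-coefficient comes from the unique choice of the $m$ smallest slopes.
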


 \begin{proof}
  This follows from taking into account  \eqref{equation:Eulern} or applying Theorem \ref{thm:gen} to \eqref{equation:Eulerq} with the help of Remark \ref{rem:ab}.
 \end{proof}

 Applying \cite[Theorem 36]{Ba} or \cite[Proposition 1.3.4.2]{Ma} implies that  every power $\hat E(x)^n$ of the Euler series is Borel-summable at the same level, contrarily to what happens in the case of their respective $q$-analog $\hat E(x;q)^n$; see Theorem \ref{thm:summability} in the above. Furthermore, thanks to \cite[Theorem 3.15]{DVZ}, one knows that the $Gq$-sum of $\hat E(x;q)$ tends toward the Borel-sum of $\hat E(x)$ when $q\to 1^+$. In our coming work \cite{DZ}, it would be shown that, given one generic direction $d$ and two power series $\hat f_1$ and $\hat f_2$ such that $L_j(\hat f_j)\in\CC\{x\}$ for some $L_j\in\CC\{x\}[\sigma_q]$, where $j=1$ or $2$, the $Gq$-sum of $\hat f_1\hat f_2$ along $d$ is equal to the product of the $Gq$-sum of $\hat f_1$ along $d$ with that of $\hat f_2$. In this way, one would obtain that the $Gq$-sum of $\hat E_q(x)^n$ or $\hat E(x;q)^n$ can be really expressed as the $n$-th power of that of $\hat E_q(x)$ or $\hat E(x;q)$, respectively.

\end{document}